\renewcommand{\Im}{\operatorname{Im}\nolimits}
\newcommand{\stmod}{\operatorname{\underline{mod}}\nolimits}
\newcommand{\Mod}{\operatorname{Mod}\nolimits}
\newcommand{\Ker}{\operatorname{Ker}\nolimits}
\newcommand{\cx}{\operatorname{cx}\nolimits}
\newcommand{\Fact}{\operatorname{\bf{Fact}}\nolimits}
\newcommand{\HFact}{\operatorname{\bf{HFact}}\nolimits}
\newcommand{\K}{\operatorname{\bf{K}}\nolimits}
\newcommand{\Kac}{\operatorname{\bf{K_{ac}}}\nolimits}
\newcommand{\C}{\mathscr{C}}
\newcommand{\F}{\mathscr{F}}
\newcommand{\s}{\Sigma}
\newtheorem{theorem}{Theorem}[section]
\newtheorem{corollary}[theorem]{Corollary}
\newtheorem{proposition}[theorem]{Proposition}
\theoremstyle{definition}
\theoremstyle{definition}
\newtheorem*{example}{Example}
\theoremstyle{definition}
\newtheorem{fact}[theorem]{Fact}
\newtheorem{remark}[theorem]{Remark}
\theoremstyle{remark}
\theoremstyle{definition}
\theoremstyle{definition}
\newtheorem*{notation}{Notation}
\theoremstyle{definition}
\begin{document}

\title{Matrix factorizations for quantum complete intersections}

\author{Petter Andreas Bergh and Karin Erdmann}

\address{Petter Andreas Bergh \\ Institutt for matematiske fag \\
NTNU \\ N-7491 Trondheim \\ Norway} \email{petter.bergh@ntnu.no}
\address{Karin Erdmann \\ Mathematical Institute \\ University of Oxford \\ Andrew Wiles Building \\ Radcliffe Observatory Quarter \\ Woodstock Road \\ Oxford \\ OX2 6GG \\ United Kingdom}
\email{erdmann@maths.ox.ac.uk}

\subjclass[2010]{16D50, 16E05, 16E35, 16G10, 16S80, 18E30}

\keywords{Twisted matrix factorizations, quantum complete intersections}

\thanks{The first author was partly supported by grant 221893 from the Research Council of Norway}

\begin{abstract}
We introduce twisted matrix factorizations for quantum complete intersections of codimension two. For such an algebra, we show that in a given dimension, almost all the indecomposable modules with bounded minimal projective resolutions correspond to such factorizations.
\end{abstract}

\maketitle

\section{Introduction}\label{sec:intro}

Matrix factorizations of elements in commutative rings were introduced by Eisenbud in \cite{Eisenbud}, in order to study free resolutions over the corresponding factor rings. In particular, he showed that minimal free resolutions over hypersurface rings eventually correspond to matrix factorizations, and are therefore eventually two-periodic. More precisely, let $Q$ be a regular local ring, $x$ a nonzero element, and denote the factor ring $Q/(x)$ by $R$. Eisenbud showed that if we take any finitely generated maximal Cohen-Macaulay module over $R$, without free summands, then its minimal free resolution is obtained from a matrix factorization of $x$ over $Q$.

The homotopy category of all matrix factorizations of $x$ over $Q$ forms a triangulated category in a natural way. The distinguished triangles are those that are isomorphic to the standard triangles constructed using mapping cones, as in the homotopy category of complexes over an additive category. Buchweitz remarked in \cite{Buchweitz} that in the above situation, with $Q$ regular, the homotopy category of matrix factorizations of $x$ is equivalent to the singularity category of $R$; this was proved explicitly by Orlov in \cite[Theorem 3.9]{Orlov}. 

\sloppy In this paper, we study twisted matrix factorizations for four dimensional quantum complete intersections of the form
$$A = k \langle x,y \rangle / (x^2, xy-qyx, y^2),$$
where $k$ is a field and $q$ is a nonzero element of $k$. Namely, for the algebra 
$$B = k \langle x,y \rangle / (x^2,y^2,xyx,yxy),$$ 
we construct a homotopy category $\HFact \left ( \F(B), S_{\nu}, \eta_w \right )$ of twisted matrix factorizations of the element $w=xy-qyx$, the twisting being with respect to the automorphism $\nu$ defined by $x \mapsto -q^{-1}x$ and $y \mapsto -qy$. By \cite{BerghJorgensen}, this homotopy category is triangulated in a natural way. It is related to the categories studied in \cite{CCKM}, whose twisted matrix factorizations are in some sense the complements of the ones we define. Namely, in \cite{CCKM} the factorizations are taken with respect to a regular element, whereas our element $w$ is actually a socle element in the algebra $B$. The resulting theories are quite different.

Similarly to the classical commutative case, reduction modulo the element $w$ induces a triangle functor
$$\HFact \left ( \F(B), S_{\nu}, \eta_w \right ) \longrightarrow \Kac \left ( \F (A) \right )$$
from this homotopy category of twisted matrix factorizations over $B$ into the homotopy category of acyclic complexes of free modules over the quantum complete intersection $A$. The latter is equivalent to the stable module category $\stmod A$, and so we obtain a triangle functor  
$$\HFact \left ( \F(B), S_{\nu}, \eta_w \right ) \longrightarrow \stmod A.$$
The image of a twisted matrix factorization is an $A$-module with a ``twisted two-periodic'' minimal free resolution. In particular, its minimal free resolution is bounded. In our main result, we show that when the field $k$ is algebraically closed, then almost all the indecomposable $A$-modules with bounded minimal projective resolutions are obtained this way, in the following sense. In a given dimension, all except two of the $A$-modules with such projective resolutions belong to the image of the triangle functor, and thus correspond to twisted matrix factorizations over $B$.

\section{Preliminaries}\label{sec:prelims}

In this section, we recall some of the theory from \cite{BerghJorgensen}. Fix an additive category $\C$, an additive automorphism $S
\colon \C \to \C$, and a central element $\eta$ in the suspended category $( \C, S )$. Thus $\eta$ is a natural transformation $1_{\C} \to S$ with
$$\eta_{S M} = S \eta_M$$ 
for all $M \in \C$. A \emph{$( \C, S )$-factorization} of $\eta$ is a sequence
$$\xymatrix@C=30pt{
M \ar[r]^{f} & N \ar[r]^{g} & SM}$$ 
of objects and morphisms in $\C$, satisfying 
$$g \circ f = \eta_M, \hspace{5mm} (S f) \circ g = \eta_N.$$
We denote such a factorization by $(M,N,f,g)$. A morphism 
$$\theta \colon (M_1,N_1,f_1,g_1) \to (M_2,N_2,f_2,g_2)$$
between factorizations is a pair $\theta = ( \psi, \phi )$ of morphisms in $\C$, with $\psi \colon M_1 \to M_2$ and $\phi \colon N_1 \to N_2$, such that the diagram
$$\xymatrix@C=30pt@R=20pt{
M_1 \ar[r]^{f_1} \ar[d]^{\psi} & N_1 \ar[r]^{g_1}  \ar[d]^{\phi} &
SM_1  \ar[d]^{S \psi} \\
M_2 \ar[r]^{f_2} & N_2 \ar[r]^{g_2} & SM_2}$$
commutes. Such a morphism is an isomorphism if both $\psi$ and $\phi$ are isomorphisms in $\C$. 

\sloppy The collection of all $( \C, S )$-factorizations of $\eta$, together with the morphisms just described, becomes an additive category $\Fact ( \C, S,
\eta )$. Its homotopy category $\HFact ( \C, S, \eta )$ has the same objects, but the morphisms are the homotopy equivalence classes $[ \theta ]$ of morphisms $\theta$ in $\Fact ( \C, S,\eta )$. Here, the notion of homotopy is the standard one. Namely, two morphisms
\begin{eqnarray*}
\theta = ( \psi, \phi ) \colon (M_1,N_1,f_1,g_1) \to (M_2,N_2,f_2,g_2) \\
\theta' = ( \psi', \phi' ) \colon (M_1,N_1,f_1,g_1) \to (M_2,N_2,f_2,g_2)
\end{eqnarray*}
in $\Fact ( \C, S, \eta )$ are homotopic if there exist diagonal morphisms
$$\xymatrix@C=50pt{
M_1 \ar[r]^{f_1} \ar[d]^{\psi'}_{\psi} & N_1 \ar[r]^{g_1}
\ar[d]^{\phi'}_{\phi} \ar[dl]_{s} & SM_1  \ar[d]^{S \psi'}_{S \psi} \ar[dl]_{t} \\
M_2 \ar[r]^{f_2} & N_2 \ar[r]^{g_2} & SM_2}$$
satisfying
\begin{eqnarray*}
\psi - \psi' & = & s \circ f_1 +(S^{-1}g_2) \circ (S^{-1}t) \\
\phi - \phi' & = & t \circ g_1 + f_2 \circ s.
\end{eqnarray*} 
Homotopies are compatible with all the operations in $\Fact ( \C, S, \eta )$, thus the homotopy category $\HFact ( \C, S, \eta )$ is also additive.

Given a $( \C, S )$-factorization $(M,N,f,g)$ of $\eta$, we obtain a new factorization $(N,SM,g,Sf)$ by rotating to the left. The suspension $\s (M,N,f,g)$ of $(M,N,f,g)$ is this new factorization with a sign change on the maps, i.e.\ $\s (M,N,f,g) = (N,SM,-g,-Sf)$:
$$\xymatrix@C=40pt{
(M,N,f,g) \colon & M \ar[r]^{f} & N \ar[r]^{g} & SM \\
\s (M,N,f,g) \colon & N \ar[r]^{-g} & SM \ar[r]^{-Sf} & SN}$$ 
This assignment induces an additive automorphism $\s$ on $\HFact ( \C, S, \eta )$. Next, consider a morphism $\theta$
$$\xymatrix@C=30pt@R=20pt{
M_1 \ar[r]^{f_1} \ar[d]^{\psi} & N_1 \ar[r]^{g_1}  \ar[d]^{\phi} &
SM_1  \ar[d]^{S \psi} \\
M_2 \ar[r]^{f_2} & N_2 \ar[r]^{g_2} & SM_2}$$
of $( \C, S )$-factorizations of $\eta$. Its mapping cone $C_{\theta}$ is the $( \C, S )$-factorization
$$\xymatrix@C=50pt{
N_1 \oplus M_2 \ar[r]^{\left ( \begin{smallmatrix} -g_1 & 0 \\ \phi & f_2 \end{smallmatrix} \right )} & S M_1 \oplus N_2 \ar[r]^{\left ( \begin{smallmatrix} -Sf_1 & 0 \\ S \psi & g_2 \end{smallmatrix} \right )} & S N_1 \oplus S M_2 }$$ 
of $\eta$. There are natural morphisms
$$i_{\theta} \colon (M_2,N_2,f_2,g_2) \to C_{\theta}, \hspace{5mm} \pi_{\theta} \colon C_{\theta} \to \s (M_1,N_1,f_1,g_1)$$
in $\Fact ( \C, S, \eta )$, and these are used to endow the suspended category $\left ( \HFact ( \C, S, \eta ), \s \right )$ with the structure of a triangulated category. Namely, given a morphism 
$$[ \theta ] \colon (M_1,N_1,f_1,g_1) \to (M_2,N_2,f_2,g_2)$$
in $\HFact ( \C, S, \eta)$, we declare the triangle
$$\xymatrix@C=30pt{
(M_1,N_1,f_1,g_1) \ar[r]^{[ \theta ]} & (M_2,N_2,f_2,g_2) \ar[r]^<<<<<<{[ i_{\theta} ]} & C_{\theta} \ar[r]^>>>>>>{[ \pi_{\theta} ]} & \s (M_1,N_1,f_1,g_1) }$$
in $\HFact ( \C, S, \eta)$ to be a standard triangle.

\begin{theorem}\cite{BerghJorgensen}\label{thm:triangulated}
Let $( \C, S)$ be a suspended additive category, $\eta$ a central element, and $\Delta$ the collection of all triangles in $\HFact ( \C, S, \eta)$ isomorphic to a standard triangle. Then $\left ( \HFact ( \C, S, \eta), \s, \Delta \right )$ is a triangulated category.
\end{theorem}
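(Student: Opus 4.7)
The plan is to follow the standard template for equipping the homotopy category of complexes over an additive category with a triangulated structure, adapted to the setting of $(\C,S)$-factorizations. Concretely, I would verify Verdier's axioms (TR1)--(TR4) directly, using the mapping cone $C_\theta$ and the natural morphisms $i_\theta, \pi_\theta$ already constructed above. The centrality of $\eta$ (i.e.\ $\eta_{SM} = S\eta_M$) is exactly what is needed so that the candidate block-matrix morphisms assembled below land in $\Fact(\C, S, \eta)$ and the candidate block-matrix homotopies intertwine the factorization structure.

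Axiom (TR1) is almost tautological: closure under isomorphism is built into the definition of $\Delta$; every morphism $[\theta]$ sits in a standard triangle by construction; and the triangle $X \xrightarrow{1} X \to 0 \to \s X$ is standard because one can write down an explicit contracting homotopy on $C_{1_X}$ using the block structure. For (TR2) I would verify that the forward rotation of a standard triangle built from $\theta\colon X \to Y$, namely $Y \xrightarrow{i_\theta} C_\theta \xrightarrow{\pi_\theta} \s X \xrightarrow{-\s[\theta]} \s Y$, is isomorphic in $\HFact$ to the standard triangle built from $i_\theta$; the required homotopy equivalence $C_{i_\theta} \to \s X$ is given by the obvious block projection, and its inverse and the mediating homotopy are written down in block form exactly as in the classical case for complexes. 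The backward rotation is handled by the same recipe after applying $\s^{-1}$.

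For (TR3), given a homotopy commutative square $\theta_2 \alpha \sim \beta \theta_1$, I would choose representatives together with an explicit homotopy $s$, and then assemble a morphism $C_{\theta_1} \to C_{\theta_2}$ in block form built from $\alpha$, $\beta$ and $s$; that both components define a morphism in $\Fact(\C, S, \eta)$ and that the resulting squares commute up to homotopy is a direct computation. The octahedral axiom (TR4) is the most laborious: for composable $[\theta]\colon X \to Y$ and $[\rho]\colon Y \to Z$ I would assemble the three mapping cones $C_\theta, C_\rho, C_{\rho\theta}$, write down explicit block morphisms $C_\theta \to C_{\rho\theta}$ and $C_{\rho\theta} \to C_\rho$ induced by $\rho$ and $\theta$, and check that the resulting triangle $C_\theta \to C_{\rho\theta} \to C_\rho \to \s C_\theta$ is isomorphic in $\HFact$ to a standard triangle, exhibiting the isomorphism and homotopies in block form.

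The step I expect to require the most care is (TR2), together with the homotopy-commutativity of the third and fourth faces of the octahedron in (TR4): these are the places where one must genuinely construct non-trivial homotopies rather than merely check equalities on the nose. Nothing in the argument beyond the additivity of $\C$ and the naturality and centrality of $\eta$ is used, so the entire verification is formal and strictly parallel to the classical construction of the triangulated structure on $\mathbf{K}(\C)$; the pay-off of this formal approach is that every mapping-cone and homotopy formula has exactly the same shape as in the classical case, once one reads $f$ and $g$ as the two differentials of a factorization.
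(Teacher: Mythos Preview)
The paper does not supply its own proof of this theorem: the result is quoted from \cite{BerghJorgensen} and stated without argument, serving only as background for the homotopy-category machinery used later. There is therefore nothing in the paper to compare your proposal against.

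That said, your sketch is the natural way to establish such a statement, and it is almost certainly what the cited reference does: the construction of $\HFact(\C,S,\eta)$ is deliberately modelled on the homotopy category of complexes, with the pair $(f,g)$ playing the role of a two-periodic differential and $S$ replacing the shift, so the verification of (TR1)--(TR4) via explicit block-matrix cones and homotopies goes through formally. Your identification of the centrality condition $\eta_{SM}=S\eta_M$ as the ingredient ensuring that the cone is again a factorization is correct, and your flagging of (TR2) and the octahedron as the places requiring genuine homotopies rather than strict equalities matches the classical pattern. If you were asked to supply a self-contained proof, your outline would be an appropriate skeleton; in the context of this paper, however, the theorem is simply imported.
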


\section{Matrix factorizations}\label{sec:MF}

In this section, we establish some general theory of generalized matrix factorizations over arbitrary rings, not necessarily commutative. We shall work with the following setup.

\begin{notation}
Let $B$ be a ring, $w \in B$ a central element, and denote the factor ring $B/(w)$ by $A$. Furthermore, let $\nu \colon B \to B$ be an automorphism which fixes the elements of the ideal $(w)$, that is, $\nu (bw) = bw$ for all $b \in B$. 
\end{notation}

Note that the automorphism $\nu$ fixes the element $w$, and that it induces an automorphism on $A$: we denote also this by $\nu$. Now for every left $B$-module $M$, consider the twisted module ${_{\nu}M}$, on which $B$ acts via $\nu$, i.e.\ $b \cdot m = \nu (b) m$ for $b \in B$ and $m \in {_{\nu}M}$. The assignment $M \mapsto {_{\nu}M}$ induces an additive functor $S_{\nu} \colon \Mod B \to \Mod B$ on the category of left $B$-modules, a functor which acts as the identity on morphisms. This functor is an automorphism with inverse $S_{\nu^{-1}}$. Now for every $M \in \Mod B$, let $\eta_M \colon M \to S_{\nu}M$ be the multiplication map given by $m \mapsto wm$. Since $\nu (b)w = \nu (bw) = bw$ for all $b \in B$, the equalities
$$\eta_M(bm) = wbm = \nu (b) wm = b \cdot (wm) = b \cdot \eta_M(m)$$
hold, showing that the map $\eta_M$ is $B$-linear. Moreover, for every $B$-linear map $M \xrightarrow{f} N$ in $\Mod B$ the diagram
$$\xymatrix@C=30pt{
M \ar[r]^f \ar[d]^{\eta_M} & N \ar[d]^{\eta_N} \\
S_{\nu}M \ar[r]^{f=S_{\nu}f} & S_{\nu}N }$$
commutes, hence the collection 
$$\eta_w = \{ \eta_M \mid M \in \Mod B \}$$ 
forms a natural transformation $\eta_w \colon 1_{\Mod B} \to S_{\nu}$. It is easy to see that the coordinate maps commute with the functor $S_{\nu}$, that is, $\eta_{S_{\nu}M} = S_{\nu} \eta_M$ for all $M \in \Mod B$: this follows from the fact that $\nu (w) = w$. Hence the natural transformation $\eta_w$ is a central element in the suspended additive category $\left ( \Mod B, S_{\nu} \right )$.

We now follow the setup from Section \ref{sec:prelims}. Consider the homotopy category $\HFact \left ( \Mod B, S_{\nu}, \eta_w \right )$ of $\left ( \Mod B, S_{\nu} \right )$-factorizations of $\eta_w$, which is triangulated (in terms of standard triangles) by Theorem \ref{thm:triangulated}. By definition, its elements are sequences
$$\xymatrix@C=30pt{
M \ar[r]^{f} & N \ar[r]^{g} & {_{\nu}M}}$$ 
of left $B$-modules and $B$-linear maps, with
$$g \circ f = \eta_M, \hspace{5mm} f \circ g = \eta_N.$$
This is a noncommutative version of a category of matrix factorizations, and related to the categories studied in \cite{CCKM}, one important difference being that our element $w$ is not regular in $B$. The following result shows that reduction modulo $w$ induces a triangle functor from $\HFact \left ( \Mod B, S_{\nu}, \eta_w \right )$ to the homotopy category $\K( \Mod A)$ of complexes of left $A$-modules.

\begin{proposition}\label{prop:functor}
Let $B$ be a ring, $w \in B$ a central element, put $B/(w) = A$, and let $\nu \colon B \to B$ be an automorphism with $\nu (bw) = bw$ for all $b \in B$. Furthermore, let $S_{\nu} \colon \Mod B \to \Mod B$ be the twisting functor given by $\nu$, and $\eta_w \colon 1_{\Mod B} \to S_{\nu}$ the natural transformation with coordinates $\eta_M \colon M \to {_{\nu}M}$ given by $\eta_M(m) = wm$. Then reduction modulo $w$ induces a triangle functor 
$$\HFact \left ( \Mod B, S_{\nu}, \eta_w \right ) \longrightarrow \K \left ( \Mod A \right ).$$
The image of a factorization $(M,N,f,g)$ is the complex
$$\xymatrix@C=20pt{
\cdots \ar[r] & {_{\nu^{n-1}}(N/wN)} \ar[r]^{\overline{g}} & {_{\nu^n}(M/wM)} \ar[r]^{\overline{f}} & {_{\nu^n}(N/wN)} \ar[r]^{\overline{g}} & {_{\nu^{n+1}}(M/wM)} \ar[r] & \cdots}$$
of left $A$-modules and maps, with $M/wM$ in degree zero.
\end{proposition}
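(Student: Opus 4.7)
The plan is to construct the functor explicitly on objects, then on morphisms, then to check the three compatibilities: descent to the homotopy category, additivity/functoriality, and compatibility with the triangulated structure.

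First, for a factorization $(M,N,f,g)$, centrality of $w$ together with $B$-linearity of $f,g$ gives $f(wM) \subseteq wN$ and $g(wN) \subseteq wM$, so $f$ and $g$ descend to maps $\bar f\colon M/wM \to N/wN$ and $\bar g\colon N/wN \to {_{\nu}(M/wM)}$. Since $\nu$ fixes $w$, the canonical identification ${_{\nu}(M/wM)} = ({_{\nu}M})/w({_{\nu}M})$ is compatible with these reductions, so iterating $S_\nu$ splices the factorization into a bi-infinite sequence. The compositions $\bar g\bar f$ and $\bar f\bar g$ vanish because $gf = \eta_M$ and $fg = \eta_N$ both act as multiplication by $w$, which is zero modulo $w$; this is exactly the complex displayed in the statement.

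Next, for a morphism $\theta = (\psi,\phi)$ between factorizations, the maps $\psi$ and $\phi$ descend to $A$-linear maps on the quotients, and the commutativity of the defining squares for $\theta$ reduces to commutativity of the squares in $\K(\Mod A)$, giving a chain map $\bar\theta$ between the associated complexes. Additivity and respect for composition are immediate. For descent to $\HFact$, suppose $(s,t)$ is a homotopy between $\theta$ and $\theta'$; reducing $s\colon N_1 \to M_2$ and $t\colon SM_1 \to N_2$ modulo $w$ produces $A$-linear maps that serve as the components of a chain homotopy between $\bar\theta$ and $\bar\theta'$, because the two relations $\psi - \psi' = sf_1 + (S^{-1}g_2)(S^{-1}t)$ and $\phi - \phi' = tg_1 + f_2 s$ are precisely the chain-homotopy identities for alternating differentials $\bar f,\bar g$, extended periodically by applying $S_{\nu^n}$ at each stage. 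Hence the construction defines a functor $\HFact(\Mod B, S_\nu, \eta_w) \to \K(\Mod A)$.

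Finally, to upgrade it to a triangle functor, it suffices to show that the reduction of a standard triangle is a standard triangle in $\K(\Mod A)$, up to isomorphism. Given $\theta\colon (M_1,N_1,f_1,g_1)\to (M_2,N_2,f_2,g_2)$, write out the complex associated with the mapping cone $C_\theta$ and compare it component-by-component with the ordinary mapping cone of the chain map $\bar\theta$ in $\K(\Mod A)$; after matching the ${_{\nu^n}(-/w-)}$ summands along the two gradings, the differentials agree on the nose, including signs, because the signs in $C_\theta$ match those in the standard mapping-cone differential. The same comparison identifies the inclusion $i_\theta$ and projection $\pi_\theta$ with the usual structure maps of the mapping cone, so standard triangles go to standard triangles and, via Theorem \ref{thm:triangulated} on the source and the usual triangulation of $\K(\Mod A)$ on the target, the functor is triangulated.

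The main obstacle is the last step: matching the mapping cone $C_\theta$ (which, after unfolding via $S_\nu$, becomes an object whose components interleave $N_i/wN_i$ with $M_i/wM_i$ twisted by various powers of $\nu$) with the chain-theoretic mapping cone of $\bar\theta$, while keeping signs and the action of $\nu$ straight. The underlying computation is routine linear algebra, but there is real bookkeeping in aligning the two bi-infinite gradings and verifying that the canonical isomorphism between them commutes with the structure morphisms $[i_\theta]$ and $[\pi_\theta]$.
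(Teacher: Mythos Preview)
Your proposal is correct and follows essentially the same approach as the paper: reduce modulo $w$ to get a complex, observe that homotopies of factorizations reduce to chain homotopies, and then argue that the functor is triangulated because the mapping-cone triangulations on both sides are defined analogously. The paper's proof is considerably terser---it simply asserts that the triangulated structures are ``analogous'' without writing out the cone comparison you sketch---but the underlying argument is the same.
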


\begin{proof}
Let $(M,N,f,g)$ be a $\left ( \Mod B, S_{\nu} \right )$-factorization of $\eta_w$. Reduction modulo $w$ gives a sequence
$$\xymatrix@C=30pt{
M/wM \ar[r]^{\overline{f}} & N/wN \ar[r]^{\overline{g}} & {_{\nu}(M/wM)} }$$
of left $A$-modules and maps. Since $g \circ f = w \cdot 1_M$ and $f \circ g = w \cdot 1_N$, the sequence
$$\xymatrix@C=20pt{
\cdots \ar[r] & {_{\nu^{n-1}}(N/wN)} \ar[r]^{\overline{g}} & {_{\nu^n}(M/wM)} \ar[r]^{\overline{f}} & {_{\nu^n}(N/wN)} \ar[r]^{\overline{g}} & {_{\nu^{n+1}}(M/wM)} \ar[r] & \cdots}$$
is a complex. A homotopy between morphisms in $\Fact \left ( \Mod M, S_{\nu}, \eta_w \right )$ becomes a homotopy between complexes after reduction, hence we obtain an additive functor
$$T \colon \HFact \left ( \Mod B, S_{\nu}, \eta_w \right ) \longrightarrow \K( \Mod A),$$
to the homotopy category of complexes of left $A$-modules. The triangulated structures on the two categories $\HFact \left ( \Mod B, S_{\nu}, \eta_w \right )$ and $\K( \Mod A)$ are analogous: the distinguished triangles are those that are isomorphic to the standard triangles. Consequently, the functor $T$ is a triangle functor.
\end{proof}

In the next section, we shall mainly be dealing with free modules. The maps are then given by matrices, and the factorizations are noncommutative versions of classical matrix factorizations.

Let $\F (B)$ be the category of finitely generated free left $B$-modules. The twist ${_{\nu}F}$ of a free module $F$ is again a free module, hence the twisting functor $S_{\nu} \colon \Mod B \to \Mod B$ restricts to a functor $S_{\nu} \colon \F (B) \to \F (B)$. Similarly, the natural transformation $\eta_w \colon 1_{\Mod B} \to S_{\nu}$ restricts to a transformation $\eta_w \colon 1_{\F (B)} \to S_{\nu}$, which becomes a central element in the suspended additive category $\left ( \F (B), S_{\nu} \right )$. As in Proposition \ref{prop:functor}, reduction modulo $w$ induces a triangle functor $\HFact \left ( \F (B), S_{\nu}, \eta_w \right ) \to \K \left ( \F (A) \right )$. We record this in the following result.

\begin{proposition}\label{prop:functorfree}
Let $B$ be a ring, $w \in B$ a central element, put $B/(w) = A$, and let $\nu \colon B \to B$ be an automorphism with $\nu (bw) = bw$ for all $b \in B$. Furthermore, let $S_{\nu} \colon \F (B) \to \F (B)$ be the twisting functor given by $\nu$, and $\eta_w \colon 1_{\F (B)} \to S_{\nu}$ the natural transformation with coordinates $\eta_F \colon F \to {_{\nu}F}$ given by $\eta_F(m) = wm$. Then reduction modulo $w$ induces a triangle functor $\HFact \left ( \F (B), S_{\nu}, \eta_w \right ) \to \K( \F (A))$.
\end{proposition}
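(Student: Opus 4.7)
The plan is to deduce this as a restriction of Proposition \ref{prop:functor} to the full additive subcategory $\F (B) \subseteq \Mod B$, by checking that every construction used in that proposition preserves finitely generated free modules.

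First I would verify that the twisting functor $S_{\nu}$ restricts to $\F (B)$. If $F = B^n$ is a finitely generated free left $B$-module, then ${_{\nu}F}$ is again free of rank $n$, since $\nu$ is an automorphism of $B$ and the $B$-action on ${_{\nu}F}$ merely pre-composes scalar multiplication with $\nu$. Similarly, the coordinates $\eta_F \colon F \to {_{\nu}F}$, $m \mapsto wm$, are defined on any module and therefore restrict to a natural transformation $\eta_w \colon 1_{\F (B)} \to S_{\nu}$. This restriction is again a central element of the suspended category $(\F (B), S_{\nu})$, because the identity $\eta_{S_{\nu}F} = S_{\nu} \eta_F$ established before Proposition \ref{prop:functor} uses only $\nu (w) = w$. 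Hence $\HFact \left ( \F (B), S_{\nu}, \eta_w \right )$ is a well-defined triangulated category by Theorem \ref{thm:triangulated}.

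Next I would observe that the functor $T$ constructed in Proposition \ref{prop:functor}, applied to a factorization $(F,G,f,g)$ with $F,G \in \F (B)$, automatically lands in $\K \left ( \F (A) \right )$. Its image is a complex built from the modules $_{\nu^n}(F/wF)$ and $_{\nu^n}(G/wG)$; since $\nu$ descends to an automorphism of $A$, and $F/wF$, $G/wG$ are finitely generated free $A$-modules of the same rank as $F$, $G$, every term lies in $\F (A)$. Morphisms and homotopies of factorizations in $\F (B)$ reduce to morphisms and homotopies of free $A$-complexes in exactly the same way as in Proposition \ref{prop:functor}, so we obtain an additive functor
$$T \colon \HFact \left ( \F (B), S_{\nu}, \eta_w \right ) \longrightarrow \K \left ( \F (A) \right ).$$

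Finally, to see that $T$ is a triangle functor, I would note that the mapping cone construction in $\HFact \left ( \F (B), S_{\nu}, \eta_w \right )$ uses only finite direct sums, which preserve $\F (B)$, so standard triangles there are sent to standard triangles in $\K \left ( \F (A) \right )$ by the same diagram chase as in the proof of Proposition \ref{prop:functor}. No real obstacle is expected: the whole statement amounts to the bookkeeping observation that finitely generated free modules are closed under the three operations used — twisting by an algebra automorphism, reduction modulo a central ideal, and finite direct sums — so the argument of Proposition \ref{prop:functor} passes verbatim to these subcategories.
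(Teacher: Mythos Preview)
Your proposal is correct and follows exactly the paper's approach: the paper does not give a separate proof of this proposition but simply observes, in the paragraph immediately preceding it, that the twist of a free module is again free, that $\eta_w$ restricts accordingly, and that the argument of Proposition~\ref{prop:functor} then carries over verbatim. Your write-up is in fact more detailed than the paper's own justification, but the underlying idea---restrict Proposition~\ref{prop:functor} to $\F(B)$ by checking closure under twisting, reduction modulo $w$, and finite direct sums---is identical.
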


\section{Quantum complete intersections}\label{sec:QCI}

In this section, we apply the theory from the previous section to a class of quantum complete intersections. We fix the following notation throughout.

\begin{notation}
Let $k$ be a field and $B$ the noncommutative $k$-algebra  
$$B = k \langle x,y \rangle / (x^2, y^2, xyx,yxy).$$
Furthermore, let $q$ be a nonzero element in $k$, $w$ the quantum commutator $xy-qyx$ in $B$, and $A$ the algebra $B/(w)$. Thus $A$ is the four dimensional quantum complete intersection 
$$A = k \langle x,y \rangle / (x^2, xy-qyx, y^2).$$
\end{notation}

Our aim is to show that when $k$ is algebraically closed, then in a given dimension, almost all - in fact, all except two -  of the finitely generated indecomposable left $A$-modules with bounded minimal projective resolutions are obtained from twisted matrix factorizations of the element $w$ over $B$. The twisting automorphism $\nu \colon B \to B$ of interest acts on the generators by
$$\nu (x) = -q^{-1}x, \hspace{5mm} \nu (y) = -qy,$$
and is closely related to the Nakayama automorphism on $A$. Namely, by \cite[Lemma 3.1]{Bergh2}, the latter maps $x$ to $q^{-1}x$ and $y$ to $qy$. Note that $\nu (bw) = bw$ for all $b \in B$, hence we are in the setting from Section \ref{sec:MF}.

\begin{remark}\label{rem:automorphism}
The automorphism $\nu$ we have just defined is in some sense the only natural one when studying twisted matrix factorizations over $B$, at least among automorphisms $\mu \colon B \to B$ given by $\mu (x) = \alpha x$ and $\mu (y) = \alpha^{-1} y$ for some $\alpha \in k$. Namely, suppose that
$$\xymatrix@C=30pt{
B \ar[r]^{f} & B \ar[r]^{g} & {_{\mu}B}}$$ 
is a rank one factorization in $\HFact \left ( \F(B), S_{\mu}, \eta_w \right )$. Then the maps are given by right multiplication with elements in $B$, and these are of the form $c_0 + c_1x + c_2y + c_4xy + c_5yx$. Such an element is invertible in $B$ if and only if $c_0$ is nonzero, and then the factorization is zero in the homotopy category $\HFact \left ( \F(B), S_{\mu}, \eta_w \right )$. Moreover, the socle elements $xy$ and $yx$ play no role in the factorization. Consequently, we may assume that the maps $f$ and $g$ are given by right multiplication with elements $\beta_1x + \beta_2y$ and $\gamma_1x + \gamma_2y$, respectively. The compositions $g \circ f$ and $f \circ g$ both equal $\eta_B$, which is given by multiplication with $w$. It is not hard to see that this forces the scalars $\beta_1, \beta_2, \gamma_1$ and $\gamma_2$ to be nonzero, and that $\alpha$ must be $-q^{-1}$. Thus $\mu = \nu$.
\end{remark}

The canonical way of producing an $A$-module from a twisted matrix factorization over $B$ is to reduce modulo $w$ and then take the image of one of the maps. We shall now show that this assignment is functorial. By Proposition \ref{prop:functorfree}, reduction modulo $w$ induces a triangle functor 
$$\HFact \left ( \F (B), S_{\nu}, \eta_w \right ) \longrightarrow \K \left ( \F (A) \right ).$$
The following result shows that in our present setting, this functor maps the matrix factorizations, that is, the $\left ( \F (B), S_{\nu} \right )$-factorizations of $\eta_w$, to \emph{acyclic} complexes of free $A$-modules.

\begin{theorem}\label{thm:acyclic}
Let $k$ be a field and $B$ the $k$-algebra $k \langle x,y \rangle / (x^2, y^2, xyx,yxy).$ Take an element $0 \neq q \in k$, put $w=xy-qyx$, and consider the quantum complete intersection 
$$A = B/(w) \simeq k \langle x,y \rangle / (x^2, xy-qyx, y^2).$$
Furthermore, let $\nu$ be the automorphism on $B$ defined by $\nu (x) = -q^{-1} x$ and $\nu (y) = -qy$. Finally, let $\F (B)$ be the category of finitely generated free left $B$-modules, $S_{\nu} \colon \F(B) \to \F(B)$ the twisting functor given by $\nu$, and $\eta_w \colon 1_{\F(B)} \to S_{\nu}$ the natural transformation with coordinates $\eta_F \colon F \to {_{\nu}F}$ given by $\eta_F(m) = wm$. Then reduction modulo $w$ induces a triangle functor 
$$\HFact \left ( \F(B), S_{\nu}, \eta_w \right ) \longrightarrow \Kac \left ( \F (A) \right ),$$
where $\Kac( \F(A) )$ is the homotopy category of acyclic complexes of finitely generated free left $A$-modules. 
\end{theorem}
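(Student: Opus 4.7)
The plan is as follows. By Proposition~\ref{prop:functorfree}, reduction modulo $w$ already gives a triangle functor from $\HFact(\F(B), S_\nu, \eta_w)$ to $\K(\F(A))$, so the only thing left to check is that the resulting complex of free $A$-modules is acyclic for every $(\F(B), S_\nu)$-factorization $(F,G,f,g)$ of $\eta_w$. The inclusions $\Im\bar g \subseteq \Ker\bar f$ and $\Im\bar f \subseteq \Ker\bar g$ are immediate, being the reductions modulo $w$ of $g \circ f = \eta_F$ and $(S_\nu f) \circ g = \eta_G$. For the reverse inclusions, I start with the identity $w(v-g(u))=0$ (for any $v \in F$ and $u \in G$ with $f(v) = wu$), obtained by applying $g$ and using $\nu(w)=w$. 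Since $\Ann_B(w) = \rad B$ and $F$ is free, this puts $v-g(u)$ in $\rad B \cdot F$ and automatically in $\Ker f$, so the task reduces to showing that any $r \in \rad B \cdot F$ with $f(r) = 0$ lies in $\Im g + wF$.

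The next move is to reduce to \emph{minimal} factorizations, those whose matrices for $f$ and $g$ have all entries in $\rad B$. Any unit entry in either matrix can be used, via isomorphisms of $F$ and $G$ in $\Fact(\F(B), S_\nu, \eta_w)$, to split off a trivial factorization $(B,B,1,w\cdot)$ or $(B,B,w\cdot,1)$. These trivial factorizations are null-homotopic, so their reductions are contractible and therefore acyclic; as acyclicity is homotopy-invariant in $\K(\F(A))$, it suffices to treat the minimal case.

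For a minimal factorization the facts $(\rad B)^3 = 0$ and $w \in (\rad B)^2$ become decisive. Only the degree-one parts $\Phi_1 = xF^x + yF^y$ of the matrix of $f$ and $\Gamma_1 = xG^x + yG^y$ of the matrix of $g$ contribute to $gf$ and $fg$, with $F^x, F^y, G^x, G^y$ scalar matrices; all other summands involve triple products from $\rad B$ and therefore vanish. Unwinding $gf = wI_m$ and $fg = wI_n$ using $x^2 = y^2 = 0$ together with $\nu(x) = -q^{-1}x,\ \nu(y) = -qy$ produces the relations $F^y G^x = I_m$, $F^x G^y = -qI_m$, $G^x F^y = I_n$, $G^y F^x = -qI_n$; these force $m = n$ and make all four scalar matrices invertible.

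At this point I would verify $\Ker \bar f = \Im \bar g$ by a direct calculation in $A^m$, writing a generic element $v$ componentwise in the $k$-basis $\{1,x,y,xy\}$ of $A$ (where $yx = q^{-1}xy$). The relations $x^2 = y^2 = xyx = yxy = 0$ collapse $\bar f(v) = 0$ to $a_0 = 0$ together with a single linear relation between the $x$- and $y$-coefficients of $v$, and the same relation drops out of the expression for $\bar g(u)$, so the two subspaces coincide; the higher-order parts of the matrices only enter the $xy$-coefficient multiplied by $a_0$, which is already forced to be zero, so they do not interfere. Exactness at the $G/wG$-spots follows either by the analogous computation or by rotating the factorization to $(G, S_\nu F, -g, -S_\nu f)$ and applying the same argument. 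The main obstacle I foresee is the bookkeeping caused by the twist $\nu$: the composition $g \circ f$ translates, because of the twisted target of $g$, into the matrix identity $\nu(\Phi)\Gamma = wI_m$ rather than $\Gamma\Phi = wI_m$, and the signs and powers of $q$ in the final linear relation have to be tracked carefully through this twist.
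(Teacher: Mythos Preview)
Your proposal is correct and follows essentially the same strategy as the paper: reduce to minimal factorizations by splitting off contractible summands, decompose the matrices into their $x$, $y$, $xy$, $yx$ components, use the identity $\nu(C)D = wI$ to show the $x$- and $y$-coefficient matrices are invertible (forcing equal ranks), and then verify exactness by a direct elementwise computation. The only cosmetic differences are that the paper first establishes $\Ker g \subseteq \Im f$ over $B$ and then descends to $A$, whereas you compute directly in $A^m$; your opening annihilator observation $\Ann_B(w)=\rad B$ is an extra step the paper omits, and indeed you do not end up needing it either, since your direct computation already forces $a_0=0$.
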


\begin{proof}
Consider a factorization
$$\xymatrix@C=30pt{
F \ar[r]^{f} & G \ar[r]^{g} & {_{\nu}F}}$$ 
in $\HFact \left ( \F(B), S_{\nu}, \eta_w \right )$. Since $g \circ f = \eta_F$ and $f \circ g = \eta_G$, the free $B$-modules $F$ and $G$ are of the same rank, say $r$. We may therefore assume that $F=G=B^r$, and that the maps are given by multiplying the standard generators of $B^r$ (that is, the row vectors with a single nonzero entry, the unit of $B$) on the right by $r \times r$ matrices $C= (c_{ij})$ and $D=(d_{ij})$. Thus for arbitrary elements the maps are given by
$$f \colon u \mapsto uC, \hspace{5mm} g \colon u' \mapsto \nu (u')D.$$
Here $\nu (u')$ is the row vector in $B^r$ obtained from $u'$ by applying the automorphism $\nu$ to all its entries. Note that we may assume that none of the matrices contain a unit: such an entry would imply that the factorization $(F,G,f,g)$ has a direct summand which is zero in the homotopy category $\HFact \left ( \F(B), S_{\nu}, \eta_w \right ) $.

Now take the standard generators $b_1, \dots, b_r$ of $B^r$. Applying $g \circ f$ to these gives
\begin{eqnarray*}
g \circ f (b_i) & = & g( b_iC ) \\
& = & g \left ( \sum_{j=1}^r c_{ij}b_j \right) \\
& = &  \sum_{j=1}^r \nu (c_{ij})b_jD \\
& = & \sum_{j=1}^r \nu (c_{ij}) \left ( \sum_{k=1}^r d_{jk}b_k \right ) \\
& = & \sum_{k=1}^r \left ( \sum_{j=1}^r \nu (c_{ij})d_{jk} \right ) b_k \\
& = & b_i \nu (C)D,
\end{eqnarray*}
where $\nu (C)$ is the matrix obtained from $C$ by applying the automorphism $\nu$ to all its entries. Since the composite map $g \circ f$ is given by multiplying the standard generators on the right by the matrix $\nu (C)D$, for an arbitrary element it is given by
$$g \circ f \colon u \mapsto \nu (u) \nu (C)D.$$
Similarly, the matrix for the composite map $S_{\nu}(f) \circ g$ is $DC$, so that for an arbitrary element it is given by
$$S_{\nu}(f) \circ g \colon u' \mapsto \nu (u')DC.$$ 
Now look at the matrices $C$ and $D$. Since none of their entries are units and the algebra $B$ has a basis $\{1,x,y,xy,yx \}$, we may decompose the matrices as
\begin{eqnarray*}
C & = & xC_1 + yC_2 + xyC_3 + yxC_4, \\
D & = & xD_1 + yD_2 + xyD_3 + yxD_4, 
\end{eqnarray*}
where the $C_i$ and $D_i$ are $r \times r$ matrices over $k$. This gives 
\begin{eqnarray*}
\nu(C)D & = & \left ( -q^{-1} xC_1 -q yC_2 + xyC_3 + yxC_4 \right ) \left ( xD_1 + yD_2 + xyD_3 + yxD_4 \right ) \\
& = & -q^{-1} xyC_1D_2 -q yxC_2D_1,
\end{eqnarray*}
which must equal $xyI - qyxI$, since $g \circ f = \eta_F$ and the matrix for this map is $(xy-qyx)I$, where $I$ is the $r \times r$ identlty matrix. Consequently, the matrices $C_1,C_2,D_1,D_2$ are invertible.

Let $u \in G$ be an element in $\Ker g$. We may write $u$ as $u = u_0 + xu_1 + yu_2 + xyu_3 + yxu_4$, where the $u_i$ are row vectors in $k^r$. Then
\begin{eqnarray*}
g(u) & = & \nu (u)D \\
& = & \left ( u_0 -q^{-1} xu_1 -q yu_2 + xyu_3 + yxu_4 \right ) \left ( xD_1 + yD_2 + xyD_3 + yxD_4 \right ) \\
& = & xu_0D_1 + yu_0D_2 + xyu_0D_3 + yxu_0D_4 -q^{-1} xyu_1D_2 -qyxu_2D_1,
\end{eqnarray*}
and this is zero in ${_{\nu}F}$. Since every element in ${_{\nu}F}$ can be written uniquely as a sum $u'_0 + xu'_1 + yu'_2 + xyu'_3 + yxu'_4$, where the $u'_i$ are row vectors in $k^r$, we see immediately that $u_0D_1 =0$. But $D_1$ is invertible, hence $u_0 =0$, and so
$$0 = -q^{-1} xyu_1D_2 -qyxu_2D_1$$
in ${_{\nu}F}$. This forces $u_1$ and $u_2$ to vanish as well ($u_1$ because the matrix $D_2$ is also invertible), giving $u = xyu_3 + yxu_4$. But then $u$ must belong to $\Im f$. Namely, since the matrices $C_1$ and $C_2$ are of rank $r$, there are row vectors $u'_1,u'_2 \in k^r$ with $u'_1C_1 = u_4$ and $u'_2C_2 = u_3$, giving
\begin{eqnarray*}
f( yu'_1 + xu'_2 ) & = &  \left ( yu'_1+xu'_2 \right ) \left ( xC_1 + yC_2 + xyC_3 + yxC_4 \right ) \\
& = & yxu'_1C_1 + xyu'_2C_2 \\
& = & yxu_4 + xyu_3 \\
& = & u.
\end{eqnarray*}
This shows that $\Ker g \subseteq \Im f$. 

Now take an element $u \in G$, and suppose that $u + wG$ belongs to $\Ker \overline{g}$ in the reduced sequence
$$\xymatrix@C=30pt{
F/wF \ar[r]^{\overline{f}} & G/wG \ar[r]^{\overline{g}} & {_{\nu}(F/wF)}}$$ 
Then $g(u) = wu = g \circ f (u')$ for some $u' \in F$, giving $g( u - f(u') ) = 0$. We showed above that $\Ker g \subseteq \Im f$, hence $u$ must belong to $\Im f$. It follows that the element $u + wG$ belongs to $\Im \overline{f}$, so the reduced sequence is exact. Similarly, the sequence
$$\xymatrix@C=25pt{
\cdots \ar[r] & {_{\nu^{n-1}}(G/wG)} \ar[r]^{\overline{g}} & {_{\nu^n}(F/wF)} \ar[r]^{\overline{f}} & {_{\nu^n}(G/wG)} \ar[r]^{\overline{g}} & {_{\nu^{n+1}}(F/wF)} \ar[r] & \cdots}$$
is acyclic, hence the image of the functor is contained in $\Kac( \F(A) )$.
\end{proof}

The following will be useful later.

\begin{remark}\label{rem:matrices}
As shown in the above proof, the $x$ and $y$ ``components'' of the two maps in a nonzero factorization in $\HFact \left ( \F(B), S_{\nu}, \eta_w \right )$ are invertible matrices. Namely, let
$$\xymatrix@C=30pt{
F \ar[r]^{f} & G \ar[r]^{g} & {_{\nu}F}}$$ 
be a nonzero rank $r$ factorization, with $f$ and $g$ defined in terms of $r \times r$ matrices $C$ and $D$, respectively. These matrices decompose as
\begin{eqnarray*}
C & = & xC_1 + yC_2 + xyC_3 + yxC_4, \\
D & = & xD_1 + yD_2 + xyD_3 + yxD_4, 
\end{eqnarray*}
where the $C_i$ and $D_i$ are $r \times r$ matrices over $k$. The four matrices $C_1,C_2,D_1,D_2$ were shown to be invertible.

Conversely, let $C_1$ and $C_2$ be invertible $r \times r$ matrices over $k$, and consider the $B$-linear maps $f \colon B^r \to B^r$ and $g \colon B^r \to {_{\nu}B^r}$ defined in terms of the matrices $xC_1+yC_2$ and $xC_2^{-1} -qyC_1^{-1}$, respectively. Then the matrix for the composition $g \circ f$ is
$$\nu \left ( xC_1+yC_2 \right ) \left ( xC_2^{-1} -qyC_1^{-1} \right ) = \left ( -q^{-1}xC_1 - qyC_2 \right ) \left ( xC_2^{-1} -qyC_1^{-1} \right ) = wI,$$
and for the composition $S_{\nu}(f) \circ g$ it is
$$\left ( xC_2^{-1} -qyC_1^{-1} \right ) \left ( xC_1+yC_2 \right ) = wI.$$
Thus $g \circ f = \eta_{B^r} = S_{\nu}(f) \circ g$, and so
$$\xymatrix@C=30pt{
B^r \ar[r]^{f} & B^r \ar[r]^{g} & {_{\nu}B^r}}$$ 
is an element of $\HFact \left ( \F(B), S_{\nu}, \eta_w \right )$.
\end{remark}

In the commutative case, when the element one factors out is a non-zerodivisor in the ring (and there is no twisting involved), the triangle functor from the homotopy category of matrix factorizations to the homotopy category of acyclic complexes over the factor ring is always faithful, as shown in \cite[Proposition 3.3]{BerghJorgensen1}. The following example shows that this is not the case here.

\begin{example}
Consider the diagram
$$\xymatrix@C=40pt{
B \ar[r]^{\cdot (x+y)} & B \ar[r]^{\cdot (x-qy)} & {_{\nu}B}}$$
As in the proof of Theorem \ref{thm:acyclic}, the composition of the two maps is given by 
$$1 \mapsto \nu (x+y) (x-qy) = (-q^{-1}x-qy)(x-qy) = w.$$
Moreover, the composition of the two maps
$$\xymatrix@C=40pt{
B \ar[r]^{\cdot (x-qy)} & {_{\nu}B} \ar[r]^{S_{\nu} \left (\cdot (x+y) \right )} & {_{\nu}B}}$$
is given by
$$1 \mapsto (x-qy) (x+y) = xy-qyx = w,$$
hence the top diagram is an object of $\Fact \left ( \F(B), S_{\nu}, \eta_w \right )$. Now the diagram
$$\xymatrix@C=40pt{
B \ar[r]^{\cdot (x+y)} \ar[d]^0 & B \ar[r]^{\cdot (x-qy)} \ar[d]^{\cdot w} & {_{\nu}B} \ar[d]^0 \\
B \ar[r]^{\cdot (x+y)} & B \ar[r]^{\cdot (x-qy)} & {_{\nu}B} }$$
commutes, and therefore represents a morphism in $\Fact \left ( \F(B), S_{\nu}, \eta_w \right )$. It is easily seen that this morphism is not nullhomotopic, and so it represents a nonzero morphism in the homotopy category $\HFact \left ( \F(B), S_{\nu}, \eta_w \right )$. However, this morphism trivially maps to zero by the functor $\HFact \left ( \F(B), S_{\nu}, \eta_w \right ) \to \Kac \left ( \F (A) \right )$.
\end{example}

Returning to the general theory, we now show that we can assign modules to twisted matrix factorizations functorially. Namely, since $A$ is a local selfinjective algebra, the homotopy category $\Kac( \F(A) )$ of acyclic complexes of finitely generated free left $A$-modules is equivalent to the stable module category $\stmod A$ of finitely generated left modules. The canonical equivalence
$$\Kac( \F(A) ) \longrightarrow \stmod A$$
maps an acyclic complex to the image of its degree zero map. The following is therefore an immediate corollary to Theorem \ref{thm:acyclic}.

\begin{corollary}\label{cor:stable}
Let $k$ be a field and $B$ the $k$-algebra $k \langle x,y \rangle / (x^2, y^2, xyx,yxy).$ Take an element $0 \neq q \in k$, put $w=xy-qyx$, and consider the quantum complete intersection 
$$A = B/(w) \simeq k \langle x,y \rangle / (x^2, xy-qyx, y^2).$$
Furthermore, let $\nu$ be the automorphism on $B$ defined by $\nu (x) = -q^{-1} x$ and $\nu (y) = -qy$. Finally, let $\F (B)$ be the category of finitely generated free left $B$-modules, $S_{\nu} \colon \F(B) \to \F(B)$ the twisting functor given by $\nu$, and $\eta_w \colon 1_{\F(B)} \to S_{\nu}$ the natural transformation with coordinates $\eta_F \colon F \to {_{\nu}F}$ given by $\eta_F(m) = wm$. Then reduction modulo $w$ induces a triangle functor 
$$T \colon \HFact \left ( \F(B), S_{\nu}, \eta_w \right ) \longrightarrow \stmod A,$$
where $\stmod A$ is the stable module category of finitely generated left $A$-modules. The image of a factorization $(F,G,f,g)$ in $\HFact \left ( \F(B), S_{\nu}, \eta_w \right )$ is the image of the map $F/wF \xrightarrow{\overline{f}} G/wG$.
\end{corollary}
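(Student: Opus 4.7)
The plan is to obtain the functor $T$ simply by composing the triangle functor furnished by Theorem \ref{thm:acyclic} with the canonical triangle equivalence
$$E \colon \Kac \left ( \F (A) \right ) \xrightarrow{\ \sim\ } \stmod A,$$
which exists because $A$ is a finite-dimensional local selfinjective (in fact Frobenius) algebra: a quantum complete intersection is always Frobenius, hence free modules coincide with injective modules, and so any acyclic complex of finitely generated free $A$-modules is totally acyclic. Under these hypotheses $E$ is a well-known triangle equivalence (due essentially to Rickard/Keller), sending an acyclic complex $X^{\bullet}$ to the image of its degree zero differential $X^0 \to X^1$.

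With this in hand, I would define $T$ to be the composite
$$\HFact \left ( \F(B), S_{\nu}, \eta_w \right ) \longrightarrow \Kac \left ( \F (A) \right ) \xrightarrow{\ E\ } \stmod A.$$
Being a composition of triangle functors, $T$ is itself a triangle functor. To verify the description of the image of a factorization $(F,G,f,g)$, I would chase through the two steps: by Theorem \ref{thm:acyclic} the first functor sends $(F,G,f,g)$ to the acyclic complex obtained by reducing $f$ and $g$ modulo $w$ (with $F/wF$ in degree zero and $\overline{f}$ as degree zero differential), and then $E$ takes this complex to $\Im \overline{f}$. This yields precisely the claimed formula.

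There is essentially no obstacle here; the only point requiring a sentence of justification is the existence and nature of the equivalence $E$, which rests on $A$ being selfinjective. Given that this is invoked in the paragraph immediately preceding the statement, the proof can be presented as a one-line invocation of Theorem \ref{thm:acyclic} composed with $E$, with the image description read off from the two steps.
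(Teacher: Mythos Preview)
Your proposal is correct and follows exactly the paper's approach: the paper states in the paragraph immediately preceding the corollary that $A$ is local selfinjective, so $\Kac(\F(A))$ is equivalent to $\stmod A$ via the functor sending an acyclic complex to the image of its degree zero map, and then declares the corollary to be immediate from Theorem~\ref{thm:acyclic}. Your argument spells out precisely this composition, with the image description read off from the two steps just as the paper intends.
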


As mentioned, we shall show that when $k$ is algebraically closed, then in a given dimension, almost all the finitely generated indecomposable left $A$-modules with bounded minimal projective resolutions are obtained from twisted matrix factorizations of the element $w$ over $B$. To do this, we use the classification of the indecomposable $A$ modules. Recall first that the \emph{complexity} of a finitely generated left $A$-module $M$ with minimal projective resolution
$$\cdots \to Q_2 \to Q_1 \to Q_0 \to M \to 0$$
is defined as
$$\cx_A M  \stackrel{\text{def}}{=} \inf \{ t \in \mathbb{N} \cup \{ 0 \} \mid \exists a \in \mathbb{R} \text{ such that } \dim_k Q_n \le an^{t-1} \text{ for } n \gg 0 \}.$$
The modules of complexity one are precisely the ones with bounded minimal projective resolutions. In \cite{BGMS}, a minimal projective bimodule resolution 
$$\mathbb{P}_A \colon \cdots \to P_2 \to P_1 \to P_0 \to A \to 0$$
of $A$ was constructed, with $\dim_k P_n = 16(n+1)$. Now if $M$ is a finitely generated left $A$-module, then the complex $\mathbb{P}_A \otimes_A M$ is acyclic, and is therefore a projective resolution of $M$. It may no longer be minimal, but its rate of growth is at most that of $\mathbb{P}_A$. Thus the complexity of any $A$-module is at most two.

Note that if the parameter $q$ is not a root of unity, then it was shown in \cite[Proposition 4.1]{Schulz} that there exist non-periodic $A$-modules of complexity one: the acyclic complex
$$\cdots \to A \xrightarrow{\cdot \left ( x+(-q)^{n+1}y \right )} A \xrightarrow{\cdot \left ( x+(-q)^{n}y \right )} A \xrightarrow{\cdot \left ( x+(-q)^{n-1}y \right )} A \to \cdots$$
gives rise to one class of such modules. However, if $q$ \emph{is} a root of unity, then by \cite[Proposition 4.1]{Schulz} again there are no such $A$-modules: in this case, the indecomposable modules of complexity one are precisely the periodic ones.

The classification of the indecomposable $A$-modules goes back to Kronecker. What follows is based on \cite[Section 4]{Schulz}, which again is based on \cite{Basev, Conlon1, Conlon2, HellerReiner}. It classifies the modules according to their complexities. 

\begin{fact}\label{fact:classification}
(1) Since complexity zero is the same as finite projective dimension, there is only one such indecomposable, namely $A$ itself. The modules of complexity two, that is, the ones having unbounded (but linearly growing) minimal projective resolutions, are the cokernels of the maps in the minimal complete resolution of the module $k$ over the algebra $k[x,y](x^2,y^2)$. These modules are all of odd dimension over $k$.

(2) The modules we are interested in are the ones of complexity one, that is, the ones with bounded minimal projective resolutions. These are all of even dimension over $k$, and \emph{in what follows we assume that $k$ is algebraically closed}. For each $n \ge 1$, the indecomposable $A$-modules of dimension $2n$ are
$$\{ C_n ( \lambda ) \mid \lambda \in k \cup \{ \infty \} \},$$
and these are described as follows. For $\lambda \in k$, denote by $J_n ( \lambda )$ the $n \times n$ Jordan matrix
$$\left ( \begin{array}{cccccc}
\lambda & 1 & 0 & \cdots & 0 & 0 \\
0 & \lambda & 1 & \cdots & 0 & 0 \\
\vdots & \vdots & \vdots & \ddots & \vdots & \vdots \\
\vdots & \vdots & \vdots & \ddots & \vdots & \vdots \\
0 & 0 & 0 & \cdots & \lambda & 1 \\
0 & 0 & 0 & \cdots & 0 & \lambda 
\end{array} \right )$$
with eigenvalue $\lambda$. The underlying vector space of $C_n ( \lambda )$ is $k^{2n}$, with the action of $x$ and $y$ given by the $2n \times 2n$ block matrices
\begin{eqnarray*}
x & \mapsto & \left ( 
\begin{array}{cc}
0 & J_n ( \lambda ) \\
0 & 0
\end{array}
\right ) \\
y & \mapsto & \left ( 
\begin{array}{cc}
0 & I_n \\
0 & 0
\end{array}
\right )
\end{eqnarray*}
where $I_n$ is the $n \times n$ identity matrix. The module $C_n ( \infty )$ has the same underlying vector space, with the action of $x$ and $y$ given by
\begin{eqnarray*}
x & \mapsto & \left ( 
\begin{array}{cc}
0 & I_n \\
0 & 0
\end{array}
\right ) \\
y & \mapsto & \left ( 
\begin{array}{cc}
0 & J_n(0) \\
0 & 0
\end{array}
\right )
\end{eqnarray*}
Note that every one of these modules has $n$ independent generators.
\end{fact}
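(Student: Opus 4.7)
The plan is to reduce the classification to a Kronecker-type matrix problem and then invoke Jordan canonical form over the algebraically closed field $k$. Part (1) is quick: $A$ is local selfinjective, so any module of finite projective dimension is projective, and the only indecomposable projective is $A$ itself; the complexity-two assertion is the identification of those indecomposables with the cokernels in the complete resolution of $k$ over the Koszul ring $k[x,y]/(x^2,y^2)$, which is the $q=1$ specialization and has been done classically. The real content is part (2), which I would tackle as follows.

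First I would establish structural constraints on a complexity-one indecomposable $M$. Since $A$ is local selfinjective with $\rad^3 A = 0$, and $M$ has a bounded (hence eventually $S_\nu$-periodic) minimal projective resolution, I would argue that $(\rad A)^2 \cdot M = 0$: any contribution from $\operatorname{soc}(A) \cdot M$ would force the Betti numbers of $M$ to grow, as can be seen by comparing $\dim_k M$ with $\dim_k \Omega M$ via the projective cover. Combined with self-injectivity and the hypothesis $\dim_k M = 2n$ with $n$ independent generators, this splits $M = V \oplus W$ as a vector space with $V = M/\rad M$, $W = \rad M = \operatorname{soc} M$, and $\dim V = \dim W = n$.

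Next I would encode the module as a pair of $n \times n$ matrices. Since $W$ is annihilated by $\rad A$, the actions of $x$ and $y$ on $M$ are determined by two linear maps $X,Y \colon V \to W$, and the defining relations $x^2 = y^2 = xy - qyx = 0$ are automatic because $XY$ and $YX$ land in $\rad W = 0$. Isomorphism of $A$-modules translates to equivalence of pairs under the action $(X,Y) \mapsto (QXP^{-1}, QYP^{-1})$ with $P \in \operatorname{GL}(V)$, $Q \in \operatorname{GL}(W)$. A separate step shows that an indecomposable of complexity one must have some nontrivial linear combination $\alpha X + \beta Y$ invertible — otherwise one builds either a nontrivial direct sum decomposition (via common kernels) or a submodule forcing unbounded growth in the resolution. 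Given this, I would normalize: if $Y$ is invertible, set $P = Y^{-1}$, $Q = I_n$ to get $Y = I_n$, leaving the conjugacy class of $X$, which over algebraically closed $k$ is a Jordan normal form; indecomposability forces a single Jordan block $J_n(\lambda)$, producing $C_n(\lambda)$. If $Y$ is not invertible but $X$ is, the symmetric normalization gives $X = I_n$ and $Y$ nilpotent, which must be $J_n(0)$, producing the exceptional object $C_n(\infty)$.

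The hard part will be the structural step in the second paragraph: both the implication ``bounded resolution $\Rightarrow$ $(\rad A)^2 M = 0$ with matched top and socle'' and the dichotomy that $\alpha X + \beta Y$ must be invertible for some $(\alpha,\beta)$. One route is to exploit periodicity — since $A$ is selfinjective, some $\Omega^i M$ is isomorphic to $M$ up to Nakayama twist, so I would compute $\Omega M$ directly in the two-matrix model and observe that only pairs $(X,Y)$ satisfying the invertibility condition yield an eventually periodic resolution. An alternative is to invoke the almost split sequence structure on $\stmod A$ and match the periodic tubes of the Auslander-Reiten quiver with the $\mathbb{P}^1$-family $\{C_n(\lambda)\}_{\lambda \in k \cup \{\infty\}}$, which is the perspective taken in the representation-theoretic sources cited in the paper.
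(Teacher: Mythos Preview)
The paper does not prove this statement: it is recorded as a \emph{Fact} and attributed to the literature, specifically \cite[Section~4]{Schulz} building on \cite{Basev, Conlon1, Conlon2, HellerReiner}. So there is no proof in the paper against which to compare your proposal.

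That said, your outline is essentially the classical route taken in those references, reducing to the Kronecker matrix-pencil problem. One correction: your argument for $(\rad A)^2 M = 0$ is misattributed. It is not a consequence of bounded Betti numbers; rather, it holds for \emph{every} non-projective indecomposable $A$-module, because $(\rad A)^2 = \operatorname{soc} A$ and any element $m$ with $xy\cdot m \neq 0$ generates a free (hence injective) submodule that splits off. So this step is easier than you suggest, and the complexity hypothesis enters only later, exactly where you flag it: in forcing $\dim(M/\rad M) = \dim(\rad M)$ and in guaranteeing that some pencil member $\alpha X + \beta Y$ is invertible. Your two suggested routes for that step---direct computation of $\Omega M$ in the two-matrix model to detect periodicity, or matching regular tubes in the Auslander--Reiten quiver---are both viable and are in the spirit of the cited sources; Schulz in particular works via explicit syzygy computations.
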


We are now ready to state and prove the main result.

\begin{theorem}\label{thm:main}
Let $k$ be an algebraically closed field, $q \in k$ a nonzero element, and $A$ the quantum complete intersection
$$A = \langle x,y \rangle / (x^2, xy-qyx, y^2).$$ Furthermore, let $B, w, \nu$ and the triangle functor
$$T \colon \HFact \left ( \F(B), S_{\nu}, \eta_w \right ) \longrightarrow \stmod A$$
be as in \emph{Corollary \ref{cor:stable}}, and 
$$\{ C_n ( \lambda ) \mid n \in \mathbb{N}, \lambda \in k \cup \{ \infty \} \}$$
the indecomposable left $A$-modules with bounded projective resolutions. Then $C_n ( \lambda )$ is in the image of $T$ if and only if $\lambda \notin \{ 0, \infty \}$. Thus, for $0 \neq \lambda \in k$ there is a factorization $(F,G,f,g) \in \HFact \left ( \F(B), S_{\nu}, \eta_w \right )$ with  $C_n ( \lambda )$ isomorphic to the image of the map $F/wF \xrightarrow{\overline{f}} G/wG$, whereas for $C_n (0)$ and $C_n ( \infty )$ there are no such factorizations.
\end{theorem}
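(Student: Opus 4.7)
The plan is to describe explicitly the $A$-module attached to any object of $\HFact\left(\F(B), S_\nu, \eta_w\right)$ by the functor $T$, and then match it against the classification in Fact~\ref{fact:classification}. Starting from a nonzero factorization, the proof of Theorem~\ref{thm:acyclic} reduces to the case $F = G = B^r$ with $f$ given by right multiplication by a matrix $C = xC_1 + yC_2 + xyC_3 + yxC_4$, and similarly $g$ by $D = xD_1 + yD_2 + xyD_3 + yxD_4$, where $C_1, C_2, D_1, D_2$ are invertible $r \times r$ matrices over $k$. Writing $v_i$ for the images of the standard generators of $B^r$ under $\overline{f}$, and using $x^2 = y^2 = xyx = yxy = 0$ together with $xy = qyx$ in $A$, a short computation yields
$$x \cdot v_i = q\sum_j (C_2)_{ij}\, \overline{yxb_j}, \qquad y \cdot v_i = \sum_j (C_1)_{ij}\, \overline{yxb_j},$$
while $x$ and $y$ annihilate the $\overline{yxb_j}$. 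Using the invertibility of $C_1$ and $C_2$, one checks that $\{v_1, \ldots, v_r, \overline{yxb_1}, \ldots, \overline{yxb_r}\}$ is a $k$-basis of $\Im \overline{f}$, so this module has $k$-dimension $2r$ and Loewy length $2$.

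In this basis the actions of $x$ and $y$ are given by the $2r \times 2r$ block matrices $\left(\begin{smallmatrix}0 & qC_2^T\\ 0 & 0\end{smallmatrix}\right)$ and $\left(\begin{smallmatrix}0 & C_1^T\\ 0 & 0\end{smallmatrix}\right)$, exhibiting $\Im \overline{f}$ as a ``pencil module''. By the Kronecker classification the isomorphism class of such a module depends only on the equivalence class of the pair $(qC_2^T, C_1^T)$ under $(A,B) \sim (PAQ, PBQ)$ with $P, Q$ invertible; normalising the invertible matrix $C_1^T$ to the identity gives
$$\Im \overline{f} \;\cong\; \bigoplus_i C_{n_i}(\mu_i),$$
where the $\mu_i \in k$ and $n_i \in \mathbb{N}$ are the eigenvalues and block sizes of the Jordan normal form of the \emph{invertible} matrix $q(C_1^{-1}C_2)^T$. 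In particular every $\mu_i$ is nonzero.

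For the negative direction, since $\Im \overline{f}$ has Loewy length $2$ while $A$ itself has Loewy length $3$ (as $\rad^2 A = k \cdot yx \neq 0$), the indecomposable projective $A$ is never a summand of $\Im \overline{f}$; by Krull--Schmidt and the self-injectivity of $A$, any isomorphism $T(F,G,f,g) \cong C_n(\lambda)$ in $\stmod A$ therefore upgrades to an isomorphism of $A$-modules, which the previous paragraph rules out for $\lambda \in \{0, \infty\}$. For the positive direction, given $\lambda \in k \setminus \{0\}$, take $r = n$, $C_1 = I_n$ and $C_2 = q^{-1}J_n(\lambda)^T$; both matrices are invertible because $\lambda \neq 0$, so Remark~\ref{rem:matrices} produces an explicit factorization whose associated pencil is $(J_n(\lambda), I_n)$, and whose image under $T$ is therefore precisely $C_n(\lambda)$. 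The main obstacle is the computation of the first paragraph: identifying the Loewy length $2$ structure of $\Im \overline{f}$ and seeing that it is governed by a pencil of two \emph{invertible} matrices. Once that is in place, the Kronecker classification accounts cleanly for the two missing modules $C_n(0)$ and $C_n(\infty)$.
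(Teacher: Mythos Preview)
Your proof is correct and follows essentially the same route as the paper: compute $\Im\overline{f}$ explicitly as a $2r$-dimensional pencil module governed by the two invertible matrices $C_1,C_2$, realize $C_n(\lambda)$ for $\lambda\neq 0$ via the choice $C_1=I_n$, $C_2=q^{-1}J_n(\lambda)$ (your transposes are a harmless convention mismatch with Fact~\ref{fact:classification} and do not affect the isomorphism class), and rule out $\lambda\in\{0,\infty\}$ via the invertibility of $C_1,C_2$ --- the paper phrases this last step as a rank comparison rather than via the Jordan decomposition, but the content is the same. Your Loewy-length argument, showing that a stable isomorphism $T(F,G,f,g)\cong C_n(\lambda)$ must upgrade to an actual module isomorphism, makes explicit a point the paper leaves implicit and is a welcome addition.
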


\begin{proof}
We start by proving the following. Let
$$\xymatrix@C=30pt{
B^n \ar[r]^{f} & B^n \ar[r]^{g} & {_{\nu}B^n}}$$ 
be a rank $n$ factorization in $\HFact \left ( \F(B), S_{\nu}, \eta_w \right )$, and suppose that the matrix for $f$ is of the form $xI + yC + xyC' + yxC''$, with $C$ invertible (the matrices $I, C, C', C''$ are $n \times n$ matrices over $k$, with $I$ the identity matrix). We claim that the image of the map $F/wF \xrightarrow{\overline{f}} G/wG$ is isomorphic to the left $A$-module with underlying vector space $k^{2n}$, and with the action of $x$ and $y$ given by
\begin{eqnarray*}
x & \mapsto & \left ( 
\begin{array}{cc}
0 & qC \\
0 & 0
\end{array}
\right ) \\
y & \mapsto & \left ( 
\begin{array}{cc}
0 & I \\
0 & 0
\end{array}
\right )
\end{eqnarray*}

To see this, note that the image of the map $f$ is generated as a $B$-module by the $n$ rows $\zeta_1, \dots, \zeta_n$ of the matrix $xI + yC + xyC' + yxC''$. We describe the action of $x$ and $y$ on these generators, in terms of the standard generators $b_1, \dots, b_n$ of $B^n$, where $b_i$ is the row vector with a single nonzero entry, the unit of $B$. First, since $y^2 = 0 = yxy$ in $B$, we see that $y \left ( xI + yC + xyC' + yxC'' \right ) = yxI$, and so
$$y \zeta_i = yx b_i.$$
Similarly, since $x^2 = 0 = xyx$, we obtain $x \left ( xI + yC + xyC' + yxC'' \right ) = xyC$, hence
$$x \zeta_i = xy \sum_{j=1}^n c_{ij}b_j,$$
where $C = (c_{ij})$. Rewriting $xy$ as $qyx + w$, we obtain
\begin{eqnarray*}
x \zeta_i & = & (qyx + w) \sum_{j=1}^n c_{ij}b_j \\
& = & \sum_{j=1}^n qc_{ij} (yxb_j) + w \sum_{j=1}^n c_{ij}b_j  \\
& = & \sum_{j=1}^n qc_{ij} (y \zeta_j) + w \sum_{j=1}^n c_{ij}b_j,
\end{eqnarray*}
where we have used the equality $y \zeta_j = yx b_j$ from above. All this shows that when we factor out $w$, then the image of the map $\overline{f}$ is spanned as a vector space by the cosets of 
$$\zeta_1, \dots, \zeta_n, y \zeta_1, \dots, y \zeta_n.$$
The equality $y \zeta_i = yx b_i$ gives $x (y \zeta_i) = 0 = y (y \zeta_i$, and using it once more we see that the $2n$ cosets are linearly independent. The action of $x$ and $y$ on the vector space they generate are given by the two matrices, hence the claim follows.

Having proved the claim, let $\lambda$ be a nonzero element in $k$. Then the $n \times n$ Jordan matrix $J_n ( \lambda )$ is invertible, and so by Remark \ref{rem:matrices} there exists a factorization
$$\xymatrix@C=30pt{
B^n \ar[r]^{f} & B^n \ar[r]^{g} & {_{\nu}B^n}}$$ 
in $\HFact \left ( \F(B), S_{\nu}, \eta_w \right )$ with $xI + yq^{-1} J_n ( \lambda )$ as the matrix for the map $f$. By the claim above, the image of the map $\overline{f}$ has underllying vector space $k^{2n}$, and with the action of $x$ and $y$ given by
\begin{eqnarray*}
x & \mapsto & \left ( 
\begin{array}{cc}
0 & J_n ( \lambda ) \\
0 & 0
\end{array}
\right ) \\
y & \mapsto & \left ( 
\begin{array}{cc}
0 & I \\
0 & 0
\end{array}
\right )
\end{eqnarray*}
This is precisely the $A$-module $C_n ( \lambda )$.

Finally, we show that the $A$-modules $C_n (0)$ and $C_n ( \infty )$ cannot be obtained from twisted matrix factorizations over $B$, for any $n$. Namely, take any nonzero rank $n$ factorization 
$$\xymatrix@C=30pt{
F \ar[r]^{f} & G \ar[r]^{g} & {_{\nu}F}}$$ 
in $\HFact \left ( \F(B), S_{\nu}, \eta_w \right )$. By Remark \ref{rem:matrices}, the matrix for $f$ is of the form $xC_1+yC_2+xyC_3+yxC_4$, where the $C_i$ are $n \times n$ matrices over $k$, and with $C_1$ and $C_2$ invertible. Consider the factorization $(F,G, C_1^{-1}f, gC_1)$, in which the matrix for the map $C_1^{-1}f$ is that for $f$ multiplied on the left with $C_1^{-1}$, whereas the matrix for $gC_1$ is that for $g$ multiplied on the right with $C_1$. The diagram
$$\xymatrix@C=30pt{
F \ar[r]^{f} \ar[d]^{\cdot C_1} & G \ar[r]^{g} \ar[d]^{\cdot I} & {_{\nu}F} \ar[d]^{\cdot C_1} \\
F \ar[r]^{C_1^{-1}f} & G \ar[r]^{gC_1} & {_{\nu}F}}$$
shows that the two factorizations are isomorphic in $\HFact \left ( \F(B), S_{\nu}, \eta_w \right )$, and they are therefore mapped by the functor $T$ to isomorphic $A$-modules. The matrix for the map $C_1^{-1}f$ is $x+ yC_1^{-1}C_2+xyC_1^{-1}C_3+yxC_1^{-1}C_4$, hence from the claim we proved the image of these factorizations under $T$ is isomorphic to the following $A$-module: the underlying vector space is $k^{2n}$, and the action of $x$ and $y$ are given by
\begin{eqnarray*}
x & \mapsto & \left ( 
\begin{array}{cc}
0 & qC_1^{-1}C_2 ( \lambda ) \\
0 & 0
\end{array}
\right ) \\
y & \mapsto & \left ( 
\begin{array}{cc}
0 & I \\
0 & 0
\end{array}
\right )
\end{eqnarray*}
Both these matrices have rank $n$. However, from Fact \ref{fact:classification} we see that for $C_n(0)$, the matrix that defines the action of $x$ has rank $n-1$, as does the matrix that defines the action of $y$ on $C_n( \infty )$. Namely, these actions are given by $2n \times 2n$ block matrices with $J_n (0)$ as the nonzero $n \times n$ block. This shows that the $A$-modules $C_n (0)$ and $C_n ( \infty )$ cannot be obtained from twisted matrix factorizations over $B$.
\end{proof}


\begin{thebibliography}{CCKM}
\bibitem[Ba{\v{s}}]{Basev}V.A.\ Ba{\v{s}}ev, \emph{Representations of the group $\mathbb{Z}_2 \times \mathbb{Z}_2$ in a field of characteristic $2$} (Russian), Dokl.\ Akad.\ Nauk SSSR 141 (1961), 1015--1018. 
\bibitem[Ber]{Bergh2}P.A.\ Bergh, \emph{$Ext$-symmetry over quantum complete intersections}, Arch.\ Math.\ (Basel) 92 (2009), no.\ 6, 566--573. 
\bibitem[BJ1]{BerghJorgensen1}P.A.\ Bergh, D.A.\ Jorgensen, \emph{Complete intersections and equivalences with categories of matrix factorizations}, Homology Homotopy Appl.\ 18 (2016), no.\ 2, 377--390.
\bibitem[BJ2]{BerghJorgensen}P.A.\ Bergh, D.A.\ Jorgensen, \emph{Categorical matrix factorizations}, in preparation.
\bibitem[Buc]{Buchweitz}R.-O.\ Buchweitz, \emph{Maximal Cohen-Macaulay modules and Tate-cohomology over Gorenstein rings}, unpublished manuscript (1987), 155 pp.
\bibitem[BGMS]{BGMS}R.O.\ Buchweitz, E.L.\ Green, D.O.\ Madsen, {\O}.\ Solberg, \emph{Finite Hochschild cohomology without finite global dimension}, Math.\ Res.\ Lett.\ 12 (2005), no.\ 5--6, 805--816. 
\bibitem[CCKM]{CCKM}T.\ Cassidy, A.\ Conner, E.\ Kirkman, W.F.\ Moore, \emph{Periodic free resolutions from twisted matrix factorizations}, J.\ Algebra 455 (2016), 137--163.
\bibitem[Co1]{Conlon1}S.B.\ Conlon, \emph{Certain representation algebras}, J.\ Austral.\ Math.\ Soc.\ 5 (1965), 83--99.
\bibitem[Co2]{Conlon2}S.B.\ Conlon, \emph{Modular representations of $C_2 \times C_2$}, J.\ Austral.\ Math.\ Soc.\ 10 (1969), 363--366.
\bibitem[Eis]{Eisenbud} D.\ Eisenbud, \emph{Homological algebra on a complete intersection, with an application to group representations}, Trans.\ Amer.\ Math.\ Soc.\ 260 (1980), no.\ 1, 35--64.
\bibitem[HeR]{HellerReiner}A.\ Heller, I.\ Reiner, \emph{Indecomposable representations}, Illinois J.\ Math.\ 5 (1961), 314--323. 
\bibitem[Orl]{Orlov}D.\ Orlov, \emph{Triangulated categories of singularities and D-branes in Landau-Ginzburg models}, (Russian) Tr.\ Mat.\ Inst.\ Steklova 246 (2004), Algebr.\ Geom.\ Metody, Svyazi i Prilozh., 240--262; translation in Proc.\ Steklov Inst.\ Math.\ 2004, no.\ 3 (246), 227--248.
\bibitem[Sch]{Schulz}R.\ Schulz, \emph{Boundedness and periodicity of modules over QF rings}, J.\ Algebra 101 (1986), no.\ 2, 450--469.
\end{thebibliography}
\end{document}